\newcommand{\R}{\mathbb{R}}
\newcommand{\norm}[1]{\left|\left| #1 \right|\right|}
\newtheorem{theorem}{Theorem}
\newtheorem{lemma}[theorem]{Lemma}
\title{Regulator Design for a Congested Continuum Traffic Model with App-Routing Instability}
\author{
	Stephen Chen$^{1}$, Huan Yu$^{1}$, and Miroslav Krstic$^{1}$
	\thanks{$^{1}$: Stephen Chen, Huan Yu*, and Miroslav Krstic are with the Department of Mechanical and Aerospace Engineering, University of California, San Diego, La Jolla, CA 92093-0411 USA (e-mail: stc007@ucsd.edu, huy015@ucsd.edu*, and krstic@ucsd.edu, respectively).}%
}
\begin{document}

\maketitle

\begin{abstract}
	In this paper, we propose a control design methodology for a linearized continuum traffic model in the congested regime. The continuum traffic flow on a highway is modeled using a linearized quasilinear hyperbolic partial differential equation model known as the Aw-Rascle-Zhang (ARZ) model. The linear traffic model is augmented with a novel non-local boundary condition representing car influx due to the use of routing apps such as Google Maps and Waze. The routing apps act as real-time previews for highway traffic, introducing potentially destabilizing feedback in the app-based navigation decision process, necessitating the development of a feedback controller. We first study small-time $H^1$ solutions of the linearized model with the addition of the app-routing for sufficiently small initial data. We introduce an extended, multi-tiered boundary control design based upon the method of infinite-dimensional backstepping. Using an intermediate decoupling transformation, we account for the non-local boundary condition arising from routing app feedback. We study the existence of the extended backstepping method by characterizing the existence of the companion kernels associated with the backstepping method. Finally, we study the linear ARZ model with the app-routing extension under the designed feedback, and show that for sufficiently small $H^1$ data, the equilibrium congestion solution is exponentially stable and guarantees the existence of closed-loop solutions on the infinite time interval.
\end{abstract}
\baselineskip=.96\normalbaselineskip
\section{Introduction}

In the recent years, there has been a growing interest in the mathematical modeling and control of vehicular traffic in various contexts, especially with the advent of vehicular connectivity arising in novel technologies such as V2X (vehicle-to-everything) adaptive cruise control and mobile sensors affording greater actuation and sensing in system level traffic control and estimation.

In this paper, we are particularly interested in macroscopic models, which treat traffic phenomena as a modified fluid flow. Within this continuum context, several varying models are typically studied in analysis and control problems, and are classified into first- and second-order systems. The first-order system is developed by Lighthill-Witham-Richards (LWR) \cite{lighthill1955kinematic,richards1956shock}, involving a quasilinear first-order hyperbolic partial differential equation. While widely utilized, the LWR model approximates velocity as a static map of density, and as a result, fails to capture potential oscillatory instability (known colloquially as stop-and-go traffic). The second-order model widely used in the existing literature, in contrast, exhibits dynamics in both density and velocity, which internally generate oscillations within certain conditions. This second-order model, developed by Aw and Rascle \cite{aw2000resurrection} and later extended by Zhang \cite{zhang2002non} (together, collectively known as the ARZ model) is the model that we consider for control purposes in the paper.

Because of widespread smartphone adoption, real-time navigational routing apps such as Waze and Google Maps introduce a new significant phenomenon in traffic systems. In particular, they introduce feedback in road traffic through using real-time traffic data, however, these apps generally route in a greedy manner by locally optimizing routes. Because of this behavior, potential instability arises in road traffic. In particular, if one route has significant traffic, the incoming flux of routed cars decreases due to routing alternate routes. Conversely, if a route has little to no traffic, the routing apps will begin to prescribe a larger influx of cars. It is not difficult to conjecture that a given road will begin to operate in extreme conditions, in which it may alternate between congestion and free-flow.

Some prior literature has investigated strategies for attenuating undesirable phenomena arising in traffic by utilizing feedback controls using several different actuation methods, such as ramp metering and real-time variable speed limits. We build on top of the results found in \cite{yu2019traffic}, which achieves $L^2$ stabilization around equilibrium traffic profiles via a linearization of the ARZ model. Other particularly relevant research work is found in \cite{8550563}, which studies a modified ARZ model with nonlinear dynamic boundary condition. Of particuar interest in this work is the state space in which the authors work -- the pointwise bounded condition mandated in the original work by Aw, Rascle, and Zhang can be guaranteed on the infinite time interval, unlike stability in $L^2$. We incorporate this idea in the work in this paper. Finally, in a different context, \cite{BEKIARISLIBERIS20191} studies the stabilization of traffic phenomena in a nonlinear setting using in-domain control arising in time gap settings for adaptive cruise control. Such a paradigm employs the modern technology referred to as V2X -- vehicle-to-everything communication.

In the continuum model context, we seek to apply the method of \emph{infinite-dimensional backstepping} \cite{krsticbacksteppingpde} to design a continuum feedback controller whose purpose is to attenuate instability arising from routing apps. In the realm of backstepping control design for continuum flow models, several results exist that we build upon and extend. In \cite{dimeglio2013nplus1,hu2015nm}, full-state feedback control designs for systems of fully actuated linear hyperbolic balance laws are studied. Of particular relevance is \cite{coron2013local}, which investigates the linear backstepping control design applied to a $2 \times 2$ quasilinear hyperbolic PDE, which the ARZ model is. Notably, the linear control design (which stabilizes an equilibrium solution in $L^2$) necessitates stability in $H^2$ in the nonlinear case. Of more relevance are control for hyperbolic balance laws with non-local behavior, such as the underactuated case in \cite{chen2017transportwave}, and non-local integral terms in \cite{su2017boundary}.

\section{Modeling}
\subsection{Aw-Rascle-Zhang traffic model}
The commonly studied Aw-Rascle-Zhang (ARZ) traffic model is a 1-D continuum traffic model consisting of a quasilinear $2 \times 2$ hyperbolic partial differential equation system. One state describes the evolution of density $\rho \in L^2(0,\infty;L^\infty(\R))$, while the other describes the evolution of velocity $v \in L^2(0,\infty;L^\infty(\R))$.
\begin{align}
	\partial_t \bar{\rho} + \partial_x (\bar{\rho} \bar{v})&= 0 \label{eq:arz1}\\
	\partial_t \bar{v} + (\bar{v} - \bar{\rho} p'(\bar{\rho})) \partial_x \bar{v} &= \frac{1}{\tau}(V(\bar{\rho}) - \bar{v}) \label{eq:arz2}
\end{align}
where the traffic pressure $p:\R \rightarrow \R$ is a $C^1$ function on $\R_{+}$ defined as
\begin{align}
	p(\bar{\rho}) := v_f \left(\frac{\bar{\rho}}{\rho_m}\right)^{\gamma}
\end{align}
and the equilibrium velocity profile is chosen to be Greenshield's model 
\begin{align}
	V(\bar{\rho}) = v_f - p(\bar{\rho}) \label{eq:greenshield}
\end{align}
where $v_f \in \R_{+}$ is the freeflow velocity, $\rho_m \in \R_{+}$ is the maximum road density, $\gamma \in \R_{+}$ is a parameter capturing the aggressiveness of drivers in the road segment, and $\tau \in \R_{+}$ is a relaxation parameter related to the reaction time of drivers. In letting the relaxation $\tau \rightarrow 0$, the velocity $\bar{v}$ becomes the static relation $\bar{v} = V(\rho)$, which reduces \eqref{eq:arz1},\eqref{eq:arz2} to the widely studied Lighthill-Witham-Richards (LWR) first-order model. We also consider the initial conditions
\begin{align}
	\bar{\rho}(x,0) = \bar{\rho}_0, \quad \bar{v}(x,0) = \bar{v}_0
\end{align}
with initial $H^1$ data $(\bar{\rho}_0,\bar{v}_0)$.

The well-posedness of \eqref{eq:arz1},\eqref{eq:arz2} is studied in \cite{aw2000resurrection,zhang2002non}, in which the ARZ model is shown to be strictly hyperbolic \emph{except} for cases near vacuum ($\bar{\rho} = 0$) where the eigenvalues of the convection operator coalesce. We consider solutions in the admissible region $\mathcal{R} = \{ (\bar{\rho}(t),\bar{v}(t)) \in L^2(D) | 0 < \bar{\rho}(t) \leq \rho_m, 0 < v(t) \leq \bar{v} - p(\bar{\rho}) \}$.

\subsection{Linear ARZ}
As with \cite{yu2019traffic}, we will first apply a change of variables from $(\bar{v},\bar{\rho})$ to $(\bar{v},\bar{q})$, with $\bar{q} = \bar{\rho} \bar{v}$ (also known as the \emph{flow rate}). The flow rate $\bar{q}$ and the velocity $\bar{v}$ will follow the system dynamics governed by
\begin{align}
	\partial_t \bar{q} + v \partial_x \bar{q} &= \frac{\bar{q}(\gamma p - \bar{v})}{\bar{v}} \partial_x \bar{v} + \frac{v_f - p - \bar{v}}{\tau \bar{v}}\bar{q} \label{eq:arz_qv_q}\\
	\partial_t \bar{v} - (\gamma p - \bar{v}) \partial_x \bar{v} &= \frac{v_f - p - \bar{v}}{\tau} \label{eq:arz_qv_v}
\end{align}
which follows directly from \eqref{eq:arz1},\eqref{eq:arz2}. The traffic pressure $p$ can be rewritten in the $(\bar{v},\bar{q})$-coordinates as
\begin{align}
		p = \frac{v_f}{\rho_m^{\gamma}} \left( \frac{\bar{q}}{\bar{v}} \right)^{\gamma}
\end{align}
and the intial condition for $q$ is given by
\begin{align}
	\bar{q}(x,0) &= \rho_0 v_0
\end{align}
We will study the system around constant equilibrium flow rate $q^*$ and velocity $v^*$. These chosen equilibrium states are not independent -- they must satisfy the following compatibility condition:
\begin{align}
	q^* = \rho_m v^* \sqrt[\gamma]{\frac{v_f - v^*}{v_f}} \label{eq:equib_qv}
\end{align}
which arises due to \eqref{eq:greenshield}, and is consistent with a stationary point in the transformed model \eqref{eq:arz_qv_v}. By investigating sufficiently small pertubations about $(v^*,q^*)$
\begin{align}
	\tilde{q} := \bar{q} - q^*, \quad \tilde{v} := \bar{v} - v^*
\end{align}
we can find linearized dynamics for flow rate and velocity as
\begin{align}
	\partial_t \tilde{q} + v^* \partial_x \tilde{q} &= \frac{q^*(\gamma p^* - v^*)}{v^*} \partial_x \tilde{v} \nonumber\\&\quad+ \frac{q^*}{\tau} \left(\frac{1}{v^*} - \frac{1}{\gamma p^*} \right) \tilde{v} - \frac{\gamma p^*}{\tau v^*} \tilde{q} \label{eq:arz_qvlin_q}\\
	\partial_t \tilde{v} - (\gamma p^* - v^*) \partial_x \tilde{v} &= \frac{\gamma p^* - v^*}{\tau v^*} \tilde{v} - \frac{\gamma p^*}{\tau v^*} \tilde{q} \label{eq:arz_qvlin_v}
\end{align}
In particular, we will study solutions around the so-called \emph{congestion} regime, that is, an equilibrium profile $(v^*,q^*)$ satisfying the following condition:
\begin{align}
    0 < v^* < \frac{\gamma}{\gamma + 1} v_f \label{eq:cong_cond}
\end{align}

We utilize the following transformation to diagonalize the convection operator:
\begin{align}
	w &:= \tilde{q} - q^*\left(\frac{1}{v^*} - \frac{1}{\gamma p^*} \right) \tilde{v} \label{eq:w_def}\\
	v &:= \frac{q^*}{\gamma p^*} \tilde{v} \label{eq:v_def}
\end{align}
The transformation admits the $(v,w)$-system:
\begin{align}
	\partial_t w &= -v^* \partial_x w \label{eq:model_w}\\
	\partial_t v &= (\gamma p^* - v^*) \partial_x v + c(x) w \label{eq:model_v}
\end{align}
subject to boundary conditions
\begin{align}
	w(0,t) &= k_1 v(0,t) + U_{\textrm{rout}}(t) \label{eq:model_w_bc}\\
	v(L,t) &= k_2 w(L,t) + U_{\textrm{ramp}}(t) \label{eq:model_v_bc}
\end{align}
and initial conditions
\begin{align}
    w(x,0) = w_0, \quad v(x,0) = v_0
\end{align}
The coefficients are given by
\begin{align}
	c(x) &= -\frac{1}{\tau} \exp\left(-\frac{x}{\tau v^*}\right)\\
	k_1 &= \frac{\gamma p^* - v^*}{v^*}\\
	k_2 &= \exp\left(-\frac{L}{\tau v^*}\right)
\end{align}
Note that one can determine the congestion condition \eqref{eq:cong_cond} to enforce the following conditions on convection speeds:
\begin{align}
    v^* > 0, \quad \gamma p^* > v^*
\end{align}

The system \eqref{eq:model_w},\eqref{eq:model_v} represents a highway segment on the domain $(0,L)$, where $w$ represents a deviation around an equilibrium flow profile $w^*$, while $v$ represents a deviation around an equilibrium velocity profile $v^*$. There are two controls present at $x = 0, x = L$.
\begin{enumerate}
	\item $U_{\textrm{rout}}(t)$: Represents the influence that the routing app has on the influx of cars. One may conjecture that if the road is more congested, $U_{\textrm{rout}}(t)$ should decrease, and conversely, the opposite true if the road exhibits more free-flow characteristics.
	\item $U_{\textrm{ramp}}(t)$: Represents on-ramp metering control of influx, to be designed.
\end{enumerate}

We postulate a strict-feedback representation of the app-routing feedback--in general, this routing feedback may be extremely complex and include feedforward as well as feedback. However, the system \eqref{eq:model_w},\eqref{eq:model_v_bc} will lose its strict-feedback structure. Thus, as an initial result, we assume (heuristically) the following destabilizing app-routing feedback that preserves strict-feedback:
\begin{align}
	U_{\textrm{rout}}(t) &= \int_0^L a(y) w(y,t) dy
\end{align}
One can intepret the routing app to be an \emph{adversarial} feedback controller that potentially destabilizes the system.

\begin{figure}[tb]
		\centering
		\includegraphics[width=0.8\linewidth]{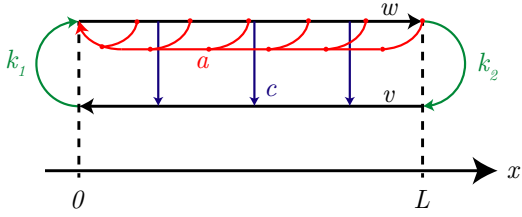}
		\caption{Schematic representation of the Riemann invariants of the linear ARZ model. Potentially destabilizing coupling appears in two ways: via the interior coupling ${\color{blue} c}$, and/or via the routing feedback ${\color{red} a}$.}
		\label{fig:system_schem}
\end{figure}

\subsection{Local well-posedness of linear ARZ with routing feedback}
Before feedback control design, we must first determine the notion of an admissible solution to the linear ARZ model. We note, in particular, that several boundedness conditions must be satisfied pointwise:
\begin{align}
	0 < \bar{q} < q_{\textrm{max}} = \rho_m \bar{v}, \quad 0 < \bar{v} < v_f, \quad \forall x \in [0,L] \label{eq:qv_bd}
\end{align}
These box constraints must be met for every spatial point. The first obvious condition is that $\bar{q},\bar{v}$ must be positive quantities. In fact, the quasilinear ARZ \eqref{eq:arz1},\eqref{eq:arz2} exhibits vanishing coefficients at vacuum ($\bar{\rho} = 0$), which is problematic for establishing existence of solutions (one can note that the characteristics coalesce when $\bar{\rho} \rightarrow 0$). The second condition $\bar{q} < q_{\textrm{max}}$, $\bar{v} < v_f$ imposes pointwise maximum values on flow rate and velocity. In particular, we do not desire the pointwise flow rate to exceed a value associated with maximum road density. If density exceeds this maximum at a point, then one may expect traffic collisions, which are undesireable.

From these observations, it becomes clear that it is desireable to seek pertubation solutions $(v,q)$ that are sufficiently bounded in the sense of the $L^\infty$ norm. That is, we impose the following conditions:
\begin{align}
	\norm{\tilde{q}}_{L^\infty} &< \min\{ q_{\textrm{max}} - q^*, q^* \} \\
	\norm{\tilde{v}}_{L^\infty} &< \min\{ v_f - v^*, v^* \}
\end{align}
which we conservatively combine into one condition
\begin{align}
	\norm{(\tilde{v},\tilde{q})}_{L^\infty} < \varepsilon_0 := \min \{ q_{\textrm{max}} - q^*, q^*, v_f - v^*, v^* \} \label{eq:qv_bnd}
\end{align}

As opposed with control design in previous literature, we recognize that the space of $L^2$ functions is insufficient to capture pointwise boundedness. Thus, we derive energy growth estimates in $H^1$ for the linearized ARZ model with $U_\textrm{ramp} = 0$. The model necessitates a set $\bar{S}$ of admissible states defined by $\emph{pointwise}$ boundeded ($L^\infty$) box constraints:
\begin{align}
	\bar{S} = \{ (v,w) \in L^{\infty} | \norm{(v,w)}_{L^\infty} < \varepsilon, \forall x \in [0,L] \}
\end{align}
where $\varepsilon$ can be computed from $\varepsilon_0$ from \eqref{eq:w_def},\eqref{eq:v_def},\eqref{eq:qv_bnd}. We restrict the set $\bar{S}$ to that of an $H^1$-bounded subset $S$. Note that this restriction is conservative, since the bound on the pointwise energy $\norm{(v,w)}_{L^\infty}$ becomes less sharp. However, the analysis in $H^1$ is simpler than that of $L^\infty$, therefore, as an initial result, we utilize this restriction. We define the subset $S$:
\begin{align}
	S = \{ (v,w) \in H^1 | \norm{(v,w)}_{H^1} < \sqrt{L} \varepsilon, \forall x \in [0,L] \}
\end{align}
One can note that $S \subset \bar{S}$ from Sobolev embedding, where the factor of $\sqrt{L}$ arises from the Sobolev embedding inequality in one dimension (note: there may be more optimal factors to incorporate more admissible initial conditions). Thus, seeking solutions in the set $S$ is sufficient to guarantee existence of solutions in $\bar{S}$, but is particularly restrictive.

It is worth to note that in \cite{8550563}, the authors study control for a different, nonlinear model with the same considerations on pointwise bounded states, and seek solutions in the Sobolev space $W^{2,\infty}$, which is naturally endowed with the $L^\infty$ norm. In our case, however, in the context of a purely linear ARZ model, we seek pointwise bounded states by considering the state space in $H^1$, which is more conservative but tractable for the Lyapunov arguments we utilize. However, if one is to consider the linear control design applied to the nonlinear ARZ model akin to the work in \cite{coron2013local}, one may need to seek $W^{2,\infty}$ solutions (or conservatively, the embedded space $H^3$) to achieve pointwise boundedness.

\begin{lemma}[$H^1$ energy estimates]
	\label{thm:h1_energy}
	Consider the strictly hyperbolic system \eqref{eq:model_w},\eqref{eq:model_v} and associated initial and boundary conditions. Then the following $H^1$ energy estimates hold:
	\begin{align}
		\norm{(v,w)}_{H^1}(t) \leq C(t) \norm{(v_0,w_0)}_{H^1}
	\end{align}
	where $C:[0,T) \rightarrow \R_+$. Then for sufficiently small initial data $(v_0,w_0)$ there exists a $T > 0$ such that
	\begin{align}
		\norm{(v,w)(\cdot,t)}_{H^1} < \varepsilon \label{eq:state_bnd}
	\end{align}
	for all $t \in [0,T)$. The finite time $T$ will depend on the parameters $a,c,k_1,k_2$, and initial data $(v_0,w_0)$, as well as the bound $\varepsilon$.
\end{lemma}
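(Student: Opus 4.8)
The plan is to run an $H^1$ energy (Lyapunov) argument and close it with Gronwall's inequality. The essential structural observation is that, because \eqref{eq:model_w}--\eqref{eq:model_v} is linear and time-invariant, the time derivatives $(w_t,v_t) := (\partial_t w,\partial_t v)$ satisfy the \emph{same} transport system, $\partial_t w_t = -v^*\partial_x w_t$ and $\partial_t v_t = (\gamma p^* - v^*)\partial_x v_t + c(x) w_t$, subject to the time-differentiated boundary conditions $w_t(0,t) = k_1 v_t(0,t) + \int_0^L a(y) w_t(y,t)\,dy$ and $v_t(L,t) = k_2 w_t(L,t)$ (recall $U_{\textrm{ramp}} = 0$). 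Hence any $L^2$ energy inequality derived for the base state applies verbatim to its time derivative, provided first-order compatibility conditions hold at the corners $(0,0)$ and $(L,0)$ so that $(w_t,v_t)$ is well defined in $L^2$. Using the PDEs to write $w_x = -\tfrac{1}{v^*} w_t$ and $v_x = \tfrac{1}{\gamma p^* - v^*}(v_t - c(x) w)$, the full $H^1$ norm is equivalent to the energy $E(t) := \int_0^L (w^2 + v^2 + w_t^2 + v_t^2)\,dx$, with equivalence constants depending only on $v^*$, $\gamma p^* - v^*$, and $\norm{c}_{L^\infty}$.

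First I would differentiate $V(t) := \int_0^L (w^2 + v^2)\,dx$ in time, substitute \eqref{eq:model_w}--\eqref{eq:model_v}, and integrate by parts. The interior coupling produces the volume term $2\int_0^L c(x) v w\,dx$, bounded immediately by $\norm{c}_{L^\infty} V$, while the transport terms leave the boundary contributions $v^* w(0,t)^2 - v^* w(L,t)^2 + (\gamma p^* - v^*)(v(L,t)^2 - v(0,t)^2)$. Substituting \eqref{eq:model_w_bc}--\eqref{eq:model_v_bc} and expanding with Young's inequality gives $w(0,t)^2 \le 2 k_1^2 v(0,t)^2 + 2\norm{a}_{L^2}^2 \norm{w}_{L^2}^2$, so the novel non-local routing term contributes the benign volume bound $2 v^* \norm{a}_{L^2}^2 \norm{w}_{L^2}^2 \le C_a V$, while $v(L,t)^2 = k_2^2 w(L,t)^2$. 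The remaining pointwise boundary terms $v(0,t)^2$ and $w(L,t)^2$ are controlled by the trace inequality $f(\cdot)^2 \le \tfrac1L \norm{f}_{L^2}^2 + 2\norm{f}_{L^2}\norm{f_x}_{L^2}$, and the appearing $\norm{v_x}_{L^2}, \norm{w_x}_{L^2}$ are converted back to $\norm{v_t}, \norm{w_t}, \norm{w}$ through the PDE identities above, so that every boundary term is dominated by $E$.

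Because $(w_t,v_t)$ obeys the identical system, the same computation yields the analogous bound for $\tfrac{d}{dt}\int_0^L (w_t^2 + v_t^2)\,dx$. Summing the two estimates produces a single differential inequality $\dot E(t) \le \alpha E(t)$, where $\alpha > 0$ depends on $v^*$, $\gamma p^* - v^*$, $\norm{c}_{L^\infty}$, $\norm{a}_{L^2}$, $k_1$, $k_2$, and $L$. Gronwall's inequality then gives $E(t) \le e^{\alpha t} E(0)$, and by the norm equivalence this is exactly the claimed estimate $\norm{(v,w)}_{H^1}(t) \le C(t)\norm{(v_0,w_0)}_{H^1}$ with $C(t) = c_{\mathrm{eq}}\,e^{\alpha t/2}$ continuous and nondecreasing. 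For the final assertion, writing $\norm{(v_0,w_0)}_{H^1} =: \delta$ small enough that $c_{\mathrm{eq}}\delta < \varepsilon$ and defining $T := \tfrac{2}{\alpha}\ln\!\big(\varepsilon/(c_{\mathrm{eq}}\delta)\big) > 0$, one has $\norm{(v,w)(\cdot,t)}_{H^1} \le c_{\mathrm{eq}} e^{\alpha t/2}\delta < \varepsilon$ for every $t \in [0,T)$, which is \eqref{eq:state_bnd}, and $T$ depends on $\delta$, $\varepsilon$, and---through $\alpha$---on $a,c,k_1,k_2,L$ as stated.

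The main obstacle I anticipate is closing the estimate at the boundary rather than in the interior. The interior coupling $c(x)w$ and the non-local routing kernel $a$ are both benign for this a priori bound: since $a \in L^2$, the routing feedback is dominated by $\norm{w}_{L^2}^2$ and therefore only inflates the growth rate $\alpha$ (consistent with its destabilizing interpretation) rather than obstructing the estimate. The delicate point is that the boundary coupling need not be dissipative, so the pointwise boundary terms cannot simply be discarded; handling them requires the trace inequality together with the PDE-based conversion of spatial into temporal derivatives, which is precisely what forces the argument into $H^1$ rather than $L^2$ and binds the two energy levels together. A secondary technical point is justifying the time differentiation through compatibility conditions, so that $(w_t,v_t)$ genuinely lies in $L^2$ along the flow.
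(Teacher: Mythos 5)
Your base-level ($L^2$) computation is sound, and the structural idea of estimating $(w_t,v_t)$ in place of $(w_x,v_x)$ and recovering the $H^1$ norm through the PDE identities is legitimate. The gap is the sentence ``Because $(w_t,v_t)$ obeys the identical system, the same computation yields the analogous bound.'' It does not. At the base level, the only reason you could control the surviving boundary traces $v(0,t)^2$ and $w(L,t)^2$ by $E$ was the trace inequality $f(x_0)^2 \le \tfrac1L \| f \|_{L^2}^2 + 2\| f \|_{L^2}\| f_x \|_{L^2}$, which costs one derivative: it bounds a pointwise value of $f$ by the $L^2$ norms of $f$ \emph{and} $f_x$. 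When you repeat the computation for $(w_t,v_t)$, the surviving boundary terms are $v_t(0,t)^2$ and $w_t(L,t)^2$, and the same trace inequality now requires $\|\partial_x v_t\|_{L^2}$ and $\|\partial_x w_t\|_{L^2}$; by the very PDE identities you invoke, these are equivalent to second-order quantities (e.g.\ $\partial_x v_t = (\gamma p^* - v^*)^{-1}(v_{tt} - c\,w_t)$), which are not part of $E$. So the scheme regresses: each energy level needs the next one to close, and the differential inequality $\dot E \le \alpha E$ is never actually established. Nor can you simply discard those traces by absorbing them into the negative flux terms of your unweighted energy: after substituting the boundary conditions, the net coefficient of $v_t(0,t)^2$ is $2v^*k_1^2 - (\gamma p^* - v^*)$, and with $k_1 = (\gamma p^* - v^*)/v^*$ this is positive whenever $2(\gamma p^* - v^*) > v^*$, which is perfectly possible in the congested regime; the $w_t(L,t)^2$ term is similar.

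The standard repair---and where the paper goes a different way---is to absorb the derivative-level traces into the outgoing-flux terms using \emph{weighted} energies rather than to bound them by volume norms. Working with $\int_0^L (p(x)w_t^2 + q(x)v_t^2)\,dx$ and the time-differentiated boundary conditions $w_t(0,t) = k_1 v_t(0,t) + \int_0^L a\,w_t\,dy$ (non-local term by Cauchy--Schwarz, exactly as you do at the base level) and $v_t(L,t) = k_2 w_t(L,t)$, one needs $v^*p(0)(1+\epsilon)k_1^2 \le (\gamma p^* - v^*)q(0)$ and $(\gamma p^* - v^*)q(L)k_2^2 \le v^*p(L)$; since only a growth estimate is required, $p,q$ may be chosen increasing along the transport directions, so these conditions can always be met regardless of the size of $k_1 k_2$, at the price of a larger Gronwall constant $\alpha$. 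The paper's own proof is structured differently but serves the same purpose: it first applies the transformation $\omega = w - \int_x^L a(y-x)w(y,t)\,dy$, which converts the non-local boundary condition into the local one $\omega(0,t) = k_1 v(0,t)$ at the cost of a boundary-trace source term in the interior transport equation, and then runs $L^2$ estimates on the states and on their $x$-derivatives before assembling the $H^1$ bound and transferring it back to $(v,w)$ by norm equivalence. Either of these routes closes the estimate; your trace-inequality route, as written, does not.
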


We omit the full proof for space in this paper, but will sketch the methodology. We first apply the transformation
\begin{align}
		\omega(x,t) &= w(x,t) - \int_x^L a(y-x) w(y,t) dy \label{eq:estimates_tfm}
\end{align}
and seek $H_1$ growth bounds for $(v,\omega)$. One takes $L^2$ inner products of the dynamic equations for $(v,\omega)$ with the states $(v,\omega)$ and integrate by parts to generate $L^2$ estimates. The dynamic equations for $(v,\omega)$ are differentated once in $x$, and the previous method is repeated for $(\partial_x v, \partial_x \omega)$, which generate $L^2$ estimates in the spatial derivatives. The two $L^2$ estimates are combined to derive an $H^1$ estimate for $(v,\omega)$. $H^1$ equivalence in norm is established between $(v,w)$ and $(v,\omega)$ using \eqref{eq:estimates_tfm} and the regularity properties of $a$, thus generating an $H^1$ energy estimate for $(v,w)$.

\section{Feedback design for linearized system}
	%
	We first utilize the following backstepping transformation to shift the interior term $c(x) w(x,t)$ in \eqref{eq:model_w}-\eqref{eq:model_v_bc} into the $x = L$ boundary, where it can be neutralized with the boundary controller.\
	\begin{align}
		z(x,t) &= v(x,t) - \int_0^x k(x,y) v(y,t) dy \nonumber\\
		&\qquad\qquad- \int_0^x l(x,y) w(y,t) dy \nonumber\\
		&\qquad\qquad- \int_x^L m(x,y) w(y,t) dy \label{eq:tfm1}
	\end{align}
	The kernels of transformation $k,l \in C^1(\mathcal{T}_l), m \in C^(\mathcal{T}_u)$ are to be determined, where $\mathcal{T}_l = \{ (x,y) \in \R^2 | 0 \leq y \leq x \leq L \}$ and $\mathcal{T}_u = \{ (x,y) \in R^2 | 0 \leq x \leq y \leq L \}$.

	This transformation is an extension to the standard infinite-dimensional backstepping technique, which typically does not exhibit the so called \emph{forwarding} transformation, which appears as the Volterra integral operator characterized by the kernel $m$. However, despite this non-standard structure, the transformation \eqref{eq:tfm1} remains a Volterra integral transformation of the second kind, as the forwarding integral enters in an affine manner in $v \leftrightarrow z$.

	\eqref{eq:tfm1} admits the following intermediate target system.
	\begin{align}
		\partial_t w(x,t) &= -v^* \partial_x w(x,t) \label{eq:model_tfm_w}\\
		\partial_t z(x,t) &= (\gamma p^* - v^*) \partial_x z(x,t) \label{eq:model_tfm_z} \\
		w(0,t) &= k_1 z(0,t) + \int_0^L a(x) w(x,t) dx \label{eq:model_tfm_w_bc}\\
		z(L,t) &= V_{\textrm{ramp}}(t) \label{eq:model_tfm_z_bc}
	\end{align}
	where $V_{\textrm{ramp}(t)}$ is defined to be
	\begin{align}
		V_{\textrm{ramp}}(t) &:= U_{\textrm{ramp}}(t) + k_2 w(L,t) - \int_0^L k(L,y) v(y,t) dy \nonumber\\
		&\qquad\qquad\qquad\qquad\qquad- \int_0^L l(L,y) w(y,t) dy \label{eq:ps_control1}
	\end{align}
	The kernels of transformation $k,l,m$ must satisfy a set of coupled conditions that comprise the following hyperbolic PDE system:
	\begin{align}
		\partial_x k(x,y) + \partial_y k(x,y) &= 0 \label{eq:k_pde} \\
		(\gamma p^* - v^*) \partial_x l(x,y) - v^* \partial_y l(x,y) &= c(y) k(x,y) \nonumber\\&\quad+ v^* l(x,0) a(y) \label{eq:l_pde} \\
		(\gamma p^* - v^*) \partial_x m(x,y) - v^* \partial_y \partial_y m(x,y) &= v^* l(x,0) a(y) \label{eq:m_pde}
	\end{align}
	subject to the boundary conditions
	\begin{align}
		k(x,0) &= \frac{k_1 v^*}{\gamma p^* - v^*} l(x,0) \label{eq:k_bc} \\
		l(x,x) &= m(x,x) - \frac{c(x)}{\gamma p^*} \\
		m(x,L) &= 0 \\
		m(0,y) &= 0 \label{eq:m_bc2}
	\end{align}
	The wellposedness of this PDE system will be studied in a later section.


	The target system \eqref{eq:model_tfm_w}-\eqref{eq:model_tfm_z_bc} still exhibits feedback. However, due to the first transformation \eqref{eq:tfm1}, the recirculatory feedback now appears \emph{strictly} in the boundary. We will apply a series of two moree invertible transformations to attenuate this recirculation.

	We will define the following parameter $\mu^*$:
	\begin{align}
		\mu^* = \frac{v^*}{\gamma p^* - v^*}
	\end{align}
	The parameter $\mu^*$ is the ratio of transport speeds between $v,w$.

	The app routing feedback, a nonstandard recirculation behavior, motivates the use of a new, piecewise transformation that shifts the nonlocal boundary coupling to that of a trace coupling existing on the interior.

	We define the following piecewise transformation from $z \leftrightarrow \eta$
	\begin{align}
		\eta(x,t) =
		\begin{dcases}
			\begin{array}{l} k_1 z(x,t) \\ \,\,+ \int_{\mu^* x}^{L} a(y) w(y-\mu^* x,t) dy \end{array}  & x \in \left[0,\frac{L}{\mu^*}\right] \\
			k_1 z(x,t) & \text{otherwise}
		\end{dcases}
		\label{eq:decoup_tfm}
	\end{align}

	Computing the time derivative of this transformation, one finds
	\begin{align}
		\partial_t \eta(x,t) =
		\begin{dcases}
		 	\begin{array}{l} k_1 (\gamma p^* - v^*) \partial_x z(x,t) \\ \quad- \Omega[w](x) \end{array} & x \in \left[0,\frac{L}{\mu^*}\right] \\
			k_1 (\gamma p^* - v^*) z_x(x,t) & \text{otherwise}
		\end{dcases}
		\label{eq:decoup_tfm_t}
	\end{align}
	where the operator $\Omega$ is defined
	\begin{align}
		\Omega[w](x) := \int_{\mu^* x}^{L} v^* a(y) \partial_y w(y-\mu^* x,t) dy
	\end{align}

	Computing the spatial derivative on $(0,L/\mu^*) \cup (L/\mu^*,L)$,
	\begin{align}
		\partial_x \eta(x,t) =
		\begin{dcases}
			\begin{array}{l} k_1 \partial_x z(x,t) \\ \quad - (\gamma p^* - v^*)^{-1} \Omega[w](x) \\ \quad - a(\mu^* x) w(0,t) \end{array} & x \in \left[0,\frac{L}{\mu^*}\right] \\
			k_1 \partial_x z(x,t) & \text{otherwise}
		\end{dcases}
		\label{eq:decoup_tfm_x}
	\end{align}
	We compute this spatial derivative almost everywhere, where a potential jump discontinuity in $z_x$ may appear at $x = L/\mu^*$. This, however, is not an issue -- the solution space we consider is $H^1$, where piecewise differentiability is admissible.

	By combining \eqref{eq:decoup_tfm_t},\eqref{eq:decoup_tfm_x}, the following evolution equation for $\eta(x,t)$ is found:
	\begin{align}
		\eta_t(x,t) &= (\gamma p^* - v^*) \eta_x(x,t) + \check{a}(x) w(0,t)
	\end{align}
	where the parameter $\check{a}(x)$ is defined piecewise as:
	\begin{align}
		\check{a}(x) :=
		\begin{dcases}
			(\gamma p^* - v^*) a(\mu^* x) & x \in [0,L/\mu^*] \\
			0 & \text{otherwise}
		\end{dcases}
		\label{eq:a_chk_def}
	\end{align}

	The $(w,\eta)$ system can then be expressed as
	\begin{align}
		\partial_t w(x,t) &= -v^* \partial_x w(x,t) \label{eq:model_tfm2_w}\\
		\eta_t(x,t) &= (\gamma p^* - v^*) \eta_x(x,t) + \check{a}(x) \eta(0,t) \label{eq:model_tfm2_eta} \\
		w(0,t) &= \eta(0,t) \label{eq:model_tfm2_w_bc}\\
		\eta(L,t) &= W_{\textrm{ramp}}(t) \label{eq:model_tfm2_eta_bc}
	\end{align}
	where $W_{\text{ramp}}(t)$ is found from evaluating the transform \eqref{eq:decoup_tfm} at $x = L$, noting that depending on the case $\mu^* \leq 1,\mu^* > 1$ one has a piecewise representation which we have compactly formulated as:
	\begin{align}
		W_{\text{ramp}}(t) &:= k_1 V_{\text{ramp}}(t) + \int_{\min\{\mu^* L,L\}}^{L} a(y) w(y-\mu^* x,t) dy \label{eq:ps_control2}
	\end{align}

	The final step involves a single backstepping transformation from $\eta(x,t)$ to $\xi(x,t)$. The target dynamic for $\xi$ is formulated as
	\begin{align}
		\xi_t(x,t) &= \xi_x(x,t) \\
		\xi(L,t) &= 0
	\end{align}
	which is achieved through the backstepping transformation
	\begin{align}
		\xi(x,t) &= \eta(x,t) - \int_0^x n(x-y) \eta(y,t) dy \label{eq:tfm3}
	\end{align}
	This leads us to our final target system $(w,\xi)$, which is trivially finite-time stable.
	\begin{align}
		\partial_t w(x,t) &= -v^* \partial_x w(x,t) \label{eq:model_tfm3_w}\\
		\xi_t(x,t) &= (\gamma p^* - v^*) \xi_x(x,t) \label{eq:model_tfm3_eta} \\
		w(0,t) &= \xi(0,t) \label{eq:model_tfm3_w_bc}\\
		\xi(L,t) &= 0 \label{eq:model_tfm3_eta_bc}
	\end{align}
	The kernel of transformation \eqref{eq:tfm3} must satisfy the following Volterra integral equation of the second kind:
	\begin{align}
		n(x) &= (v^* - \gamma p^*)^{-1} \left[ \check{a}(x) + \int_0^x \check{a}(y)n(x-y) dy \right] \label{eq:n_ie}
	\end{align}
	Finally, the controller $W_\textrm{ramp}(t)$ can be found by evaluating transform \eqref{eq:tfm3} at $x = L$:
	\begin{align}
		W_\textrm{ramp}(t) &= \int_0^L n(L-y) \eta(y,t) dy \label{eq:ps_control3}
	\end{align}
	By combining \eqref{eq:ps_control1},\eqref{eq:ps_control2},\eqref{eq:ps_control3}, the feedback controller $U_\textrm{ramp}(t)$ can be found:
	\begin{align}
		U_\textrm{ramp}(t) &= a(t) -k_2 w(L,t) + \int_0^L F_v(y) v(y,t) dy \nonumber\\
		&\qquad\qquad\qquad+ \int_0^L F_w(y) w(y,t) dy \nonumber\\
		&\quad - \int_{\min\{\mu^* L,L\}}^{L} k_1^{-1} a(y) w(y-\mu^* x,t) dy \label{eq:fdback}
	\end{align}
	where $F_v(y), F_w(y)$ are given by the following relations:
	\begin{align}
		F_v(y) &= k(L,y) + n(L-y) - \int_y^1 n(L-\xi) k(\xi,y) d\xi \\
		F_w(y) &= l(L,y) + \int_0^{\frac{L-y}{\mu^*}} k_1^{-1} n(L-\xi) a(y+\mu^* \xi) d\xi \nonumber\\&\qquad\qquad- \int_y^1 n(L-\xi) l(\xi,y) d\xi \nonumber\\&\qquad\qquad- \int_0^y n(L-\xi) m(\xi,y) d\xi
	\end{align}
	and $a(t)$ is a dynamic extension with dynamics defined as
	\begin{align}
		\dot{a}(t) &= -k_3 a(t)
	\end{align}
	where $k_3 > 0$ is chosen, and the initial condition $a(0)$ is chosen to fulfill compatibility conditions between the initial control value $U(0)$ and the initial condition $v_0$. It is important to note that the dynamically extended state is trivially exponentially stable, and therefore does not compromise the (exponential) stability of the system.

	We state the main theorem of the paper below:
	\begin{theorem}
		The boundary controller \eqref{eq:fdback} $H^1$-exponentially stabilizies the zero solution of the linear pertubation ARZ model \eqref{eq:arz_qvlin_q},\eqref{eq:arz_qvlin_v}. That is, there exists $M,\gamma \in \R_+$ such that
		\begin{align}
			\norm{(v,w)}_{H^1} \leq M \exp (-\gamma t) \norm{(v_0,w_0)}_{H^1} \label{eq:stability_ineq}
		\end{align}
		Moreover, \eqref{eq:fdback} ensures the global (in time) existence of solutions for sufficiently small initial conditions $\norm{(v_0,w_0)}_{H^1} < M^{-1} \varepsilon$, where \eqref{eq:stability_ineq} generates a priori $H^1$ energy estimates for the solution $(v,w)$.
		\label{thm:main}
	\end{theorem}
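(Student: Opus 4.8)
The plan is to prove Theorem \ref{thm:main} by transporting the finite-time stability of the fully decoupled target system \eqref{eq:model_tfm3_w}--\eqref{eq:model_tfm3_eta_bc} back to the physical variables through the chain of invertible maps $v \leftrightarrow z \leftrightarrow \eta \leftrightarrow \xi$, and then running a continuation argument against the local well-posedness Lemma \ref{thm:h1_energy} to upgrade local-in-time existence to global-in-time existence.

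First I would establish that each of the transformations \eqref{eq:tfm1}, \eqref{eq:decoup_tfm}, \eqref{eq:tfm3} is a bounded, boundedly invertible operator on $H^1$, so that their composition gives an $H^1$-norm equivalence between $(v,w)$ and $(w,\xi)$. For \eqref{eq:tfm1} this is standard for a Volterra transformation of the second kind once the kernels $k,l,m$ are known to be $C^1$ on their triangular domains (the well-posedness of the kernel system \eqref{eq:k_pde}--\eqref{eq:m_bc2} is deferred to a later section and assumed here), the inverse being again of Volterra type. The piecewise map \eqref{eq:decoup_tfm} is multiplication by the nonzero constant $k_1$ plus a bounded integral operator in the shifted argument $w(y-\mu^* x,t)$; the only subtlety is the admissible jump in $z_x$ at $x=L/\mu^*$, which is harmless in $H^1$ as already noted. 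For \eqref{eq:tfm3}, the kernel $n$ is the unique bounded solution of the Volterra equation \eqref{eq:n_ie}, obtained by successive approximation, with an inverse kernel constructed likewise. Finally, since $(v,w)$ relates to the perturbations $(\tilde q,\tilde v)$ through the invertible pointwise linear map \eqref{eq:w_def},\eqref{eq:v_def}, an $H^1$ estimate for $(v,w)$ is equivalent to one for $(\tilde q,\tilde v)$.

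Next I would analyze the target dynamics directly. By the congestion condition \eqref{eq:cong_cond} one has $\gamma p^* > v^* > 0$, so \eqref{eq:model_tfm3_eta} transports leftward and, by the homogeneous boundary condition \eqref{eq:model_tfm3_eta_bc}, reaches $\xi \equiv 0$ after time $L/(\gamma p^* - v^*)$; the coupling $w(0,t)=\xi(0,t)$ then drives $w\equiv 0$ after an additional time $L/v^*$. Hence $(w,\xi)$ vanishes identically after the finite settling time $t_f = L/(\gamma p^* - v^*) + L/v^*$, which trivially yields an exponential bound $\norm{(w,\xi)}_{H^1}(t) \leq \tilde{M} e^{-\tilde\gamma t}\norm{(w_0,\xi_0)}_{H^1}$ for any $\tilde\gamma>0$ and a corresponding $\tilde M$. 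Composing with the equivalence constants of the previous step gives \eqref{eq:stability_ineq} for the nominal feedback; the dynamic extension $a(t)$ enters the controller \eqref{eq:fdback} additively and satisfies $\dot a = -k_3 a$, so the closed loop is a cascade of the exponentially stable ODE into the finite-time-stable PDE. The PDE response at time $t$ depends only on inputs over the window $[t-t_f,t]$, so this exponentially decaying forcing is absorbed by taking $\gamma = \min\{\tilde\gamma,k_3\}$, producing the claimed $M,\gamma \in \R_+$.

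Finally, for global existence I would invoke Lemma \ref{thm:h1_energy} in a bootstrap. The lemma provides an $H^1$ solution on a maximal interval $[0,T)$ so long as the state remains in the admissible set $S$, i.e. $\norm{(v,w)(\cdot,t)}_{H^1} < \varepsilon$. For initial data with $\norm{(v_0,w_0)}_{H^1} < M^{-1}\varepsilon$, the a priori estimate \eqref{eq:stability_ineq} gives $\norm{(v,w)(\cdot,t)}_{H^1} \leq M e^{-\gamma t}\norm{(v_0,w_0)}_{H^1} < \varepsilon$ throughout $[0,T)$, so the state never exits $S$ and the solution extends; therefore $T=\infty$. The principal obstacle I anticipate is the first step: establishing \emph{time-independent} boundedness and invertibility of the composite transformation in the $H^1$ topology, in particular controlling the spatial derivative across the jump at $x=L/\mu^*$ from the piecewise decoupling map and confirming that the forwarding kernel $m$ does not destroy the lower-triangular Volterra invertibility of \eqref{eq:tfm1}. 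Once these norm equivalences are secured, the remaining steps are comparatively mechanical.
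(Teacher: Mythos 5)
Your proposal is correct and follows essentially the same route as the paper: $H^1$ norm equivalence across the chain of transformations \eqref{eq:tfm1}, \eqref{eq:decoup_tfm}, \eqref{eq:tfm3} (the paper's Lemmas \ref{lem:tfm1_h1}, \ref{lem:tfm2_h1}, \ref{lem:tfm3_h1}), decay of the target system (Lemma \ref{lem:targ_es}), and a bootstrap of the decaying estimate \eqref{eq:stability_ineq} against the admissible set $S$ to extend local solutions globally. The only minor deviation is that you obtain target-system decay from finite-time settling along characteristics while the paper extracts explicit constants $M_1,\gamma$ from a weighted Lyapunov functional (both valid --- the paper itself calls the $(w,\xi)$ system trivially finite-time stable), and the obstacles you flag are precisely those the paper resolves: the forwarding kernel $m$ enters affinely so \eqref{eq:tfm1} remains a second-kind Volterra transform, and the $H^1$ equivalence for \eqref{eq:tfm3} with the merely-$L^2$, possibly discontinuous kernel $n$ is handled in Lemma \ref{lem:tfm3_h1} by integration by parts against the convolution structure.
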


	The full proof is not given in the interest of space, but the argument is a small extension to the standard method in previous backstepping literature. One utilizes an $H^1$ stability estimate of the target system \eqref{eq:model_tfm3_w}-\eqref{eq:model_tfm3_eta_bc} coupled with invertible (and regularity-preserving) backstepping transformations \eqref{eq:tfm1},\eqref{eq:decoup_tfm},\eqref{eq:tfm3} to derive $H^1$ stability for $(v,w)$. These properties are studied briefly in Section \ref{sec:stability}.

	To enforce the global existence of solutions, we employ an argument similar to Theorem \ref{thm:h1_energy}, although instead of seeking $H^1$ energy estimates, we employ our $H^1$ stability estimate \eqref{eq:stability_ineq} instead, which, in fact, are decaying energy estimates. We seek small $H^1$ initial conditions such that \eqref{eq:state_bnd} is enforced on the time interval $(0,\infty)$. A sufficient set of admissible initial conditions can be found as the $H^1$ neighborhood $\{(v_0,w_0) \in H^1 | \norm{(v_0,w_0)}_{H^1} < M^{-1} \varepsilon\}$.

	\section{Closed-loop stability in $H^1$}
	\label{sec:stability}
		To show the feedback controller \eqref{eq:fdback} exponentially stabilizes the pertubation equilibrium $(v^*,q^*)$, we utilize a Lyapunov argument. It is not only necessary to prove convergence, but also to prove pointwise boundedness in our stability estimate. To ensure existence of solutions to the closed-loop model, the states cannot exceed a threshold either arising due to the linear model approximation failing beyond the domain of attraction, and/or the state exceeding physical flow rate/velocity constraints (positivity, maximum capacity, speed limits).

		We will give a series of lemmas that establish $H^1$ stability in the target system and establish equivalence in $H^1$ norm between all transformed states. The combination of the following lemmas will admit the result Theorem \ref{thm:main}. We will forgo some of the more straightforward proofs, and instead sketch the argument.

		\begin{lemma}
			The zero solution of the target system \eqref{eq:model_tfm3_w}-\eqref{eq:model_tfm3_eta_bc} is exponentially stable in the sense of $H^1$, that is,
			\begin{align}
				\norm{(\eta,w)}_{H^1} \leq M_1 \exp(-\gamma t) \norm{(\eta_0,w_0)}_{H^1}
			\end{align}\label{lem:targ_es}
		\end{lemma}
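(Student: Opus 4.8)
The plan is to prove exponential decay in $H^1$ via a weighted $L^2$ Lyapunov functional applied to the decoupled transport cascade \eqref{eq:model_tfm3_w}--\eqref{eq:model_tfm3_eta_bc} (whose states I denote $(w,\xi)$, consistent with those equations) and, separately, to its once-differentiated-in-$x$ counterpart. Throughout I write $\lambda := \gamma p^* - v^*$, which the congestion condition \eqref{eq:cong_cond} guarantees to be strictly positive; thus $w$ is transported rightward at speed $v^*$ with inflow at $x=0$, while $\xi$ is transported leftward at speed $\lambda$ with (zero) inflow at $x=L$. The cascade is in fact \emph{finite-time} stable---$\xi\equiv 0$ for $t>L/\lambda$ and hence $w\equiv 0$ for $t>L/\lambda+L/v^*$---so exponential stability at any prescribed rate is not in doubt; the point of the Lyapunov computation is to produce the explicit constants $M_1,\gamma$ together with a norm-equivalent functional that later feeds the closed-loop estimate.

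At the $L^2$ level I would introduce, for a weight $\delta>0$ to be fixed,
\begin{align}
	V_1(t) = a_1 \int_0^L e^{-\delta x} w^2\,dx + a_2 \int_0^L e^{\delta x} \xi^2\,dx .
\end{align}
Differentiating in time, substituting \eqref{eq:model_tfm3_w},\eqref{eq:model_tfm3_eta}, and integrating by parts produces the interior terms $-\delta v^* a_1\!\int e^{-\delta x} w^2 - \delta \lambda a_2 \!\int e^{\delta x}\xi^2$ together with the boundary contribution $v^* a_1 w(0,t)^2 - v^* a_1 e^{-\delta L} w(L,t)^2 - \lambda a_2 \xi(0,t)^2$. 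Invoking the boundary conditions $\xi(L,t)=0$ and $w(0,t)=\xi(0,t)$ collapses the boundary part to $(v^* a_1 - \lambda a_2)\xi(0,t)^2 - v^* a_1 e^{-\delta L} w(L,t)^2$, which is rendered non-positive by taking $a_2 \geq v^* a_1/\lambda$. Hence $\dot V_1 \leq -\delta \min\{v^*,\lambda\}\,V_1$, giving $L^2$ exponential decay.

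For the $H^1$ level I would differentiate \eqref{eq:model_tfm3_w},\eqref{eq:model_tfm3_eta} in $x$ and set $p := \partial_x w$, $s := \partial_x \xi$, which satisfy the identical transport equations. Their boundary data follow by differentiating the boundary relations in $t$ and substituting the PDEs: $\xi(L,t)=0$ yields $s(L,t)=0$, while $w(0,t)=\xi(0,t)$ yields $-v^* p(0,t) = \lambda s(0,t)$, i.e. $p(0,t) = -(\lambda/v^*)\, s(0,t)$. Applying the same weighted functional $V_2 = b_1\!\int e^{-\delta x} p^2 + b_2 \!\int e^{\delta x} s^2$, the boundary contribution becomes $\lambda(\lambda b_1/v^* - b_2)\,s(0,t)^2 - v^* b_1 e^{-\delta L} p(L,t)^2$, non-positive once $b_2 \geq \lambda b_1/v^*$. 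Summing, $V := V_1 + V_2$ satisfies $\dot V \leq -\delta\min\{v^*,\lambda\}\,V$; since the weights $e^{\pm\delta x}$ are bounded above and below on $[0,L]$, $V$ is equivalent to $\norm{(w,\xi)}_{H^1}^2$, and unwinding the equivalence constants delivers the claimed bound with rate $\gamma = \tfrac12 \delta \min\{v^*,\lambda\}$ and a suitable $M_1$.

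The genuinely delicate point---rather than any single estimate, all of which are routine---is the modified coupling coefficient $\lambda/v^*$ appearing in the derivative boundary condition, which is what forces the weight ratio $b_2/b_1$ to exceed $\lambda/v^*$; this is always achievable but must be tracked to keep $V$ positive definite and norm-equivalent. I would also flag that the integration-by-parts steps presuppose enough regularity: the argument is rigorous for $H^1$ solutions satisfying the zeroth-order compatibility conditions $w_0(0)=\xi_0(0)$ and $\xi_0(L)=0$, under which the characteristic solution remains in $H^1$ for all $t$; the general case follows by density. Finally, the finite-time observation noted above provides an independent, coordinate-free route to the same conclusion and a useful sanity check on the Lyapunov constants.
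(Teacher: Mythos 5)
Your proof is correct and takes essentially the same approach as the paper: a weighted exponential Lyapunov functional on the states together with the same functional applied to their spatial derivatives, where your weight-ratio conditions $a_2 \geq v^* a_1/(\gamma p^* - v^*)$ and $b_2 \geq (\gamma p^* - v^*) b_1/v^*$ are precisely the paper's conditions $d_1 > \mu^*$ and $d_2 > 1/\mu^*$, including the key derivative-level boundary relation $\partial_x w(0,t) = -(\gamma p^* - v^*)/v^* \, \partial_x \xi(0,t)$ that forces the second ratio. The only differences are cosmetic: the paper uses four separate decay weights $\delta_1,\dots,\delta_4$ where you use one, and your added remarks on compatibility conditions, density, and finite-time stability are sound extras the paper omits.
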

		Lemma \ref{lem:targ_es} is not difficult to see. One can utilize the following Lyapunov function
		\begin{align}
			V(t) &= \int_0^L \bigg[ e^{-\delta_1 x} w(x,t)^2 + d_1 e^{\delta_2 x} v(x,t)^2 \nonumber\\&\qquad+ e^{-\delta_3 x} \partial_x w(x,t)^2 + d_2 e^{\delta_4 x} \partial_x v(x,t)^2 \bigg] dx
		\end{align}
		where the coefficients are chosen $\delta_i > 0, i \in \{1,...,4\}$, and $d_1 > \mu^*, d_2 > 1/\mu^*$. The coefficients $M_1, \gamma$ are then given by
		\begin{align}
			M_1 &= \frac{\max\{e^{\delta_2 L},e^{\delta_4 L}\}}{\min\{e^{-\delta_1 L},e^{-\delta_3 L}\}} \\
			\gamma &= \frac{1}{4} \min \{ \delta_1 v^*, \delta_3 v^*, d_1 \delta_2 (\gamma p^* - v^*), d_2 \delta_4(\gamma p^* - v^*) \}
		\end{align}

		\begin{lemma}
			The transformation \eqref{eq:tfm3} and its associated inverse transform establishes $H^1$ equivalence in norm between $(\xi,w)$ and $(\eta,w)$, i.e. there exist $C_1, C_2 > 0$ such that
			\begin{align}
				C_1 \norm{(\xi,w)}_{H^1} \leq \norm{(\eta,w)}_{H^1} \leq C_2 \norm{(\xi,w)}_{H^1}
			\end{align}\label{lem:tfm3_h1}
		\end{lemma}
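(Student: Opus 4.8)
The plan is to exploit the fact that the transformation \eqref{eq:tfm3} acts only on the $\eta$-component while leaving $w$ untouched, so the claim reduces to establishing $H^1$-equivalence between $\eta$ and $\xi$ in the spatial variable and then appending the common term $\norm{w}_{H^1}$ to both sides. First I would confirm that \eqref{eq:tfm3} is a Volterra integral transformation of the second kind, hence invertible, with inverse of the same form
\begin{align}
	\eta(x,t) = \xi(x,t) + \int_0^x \bar{n}(x-y) \xi(y,t) dy
\end{align}
whose resolvent kernel $\bar{n}$ is generated by the usual successive-approximation series. Before any estimate, I would pin down the regularity of $n$ (and hence $\bar{n}$) from the Volterra equation \eqref{eq:n_ie}: since this is a second-kind equation, Picard iteration yields a unique solution inheriting the regularity of $\check{a}$, so that $n$ is bounded and piecewise $C^1$ on $[0,L]$, with the only possible derivative jump inherited from that of $\check{a}$ at $x = L/\mu^*$.

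For the $L^2$ half of the estimate I would bound the Volterra operator $K[\phi](x) := \int_0^x n(x-y)\phi(y)dy$ directly. By Cauchy--Schwarz, $|K[\phi](x)| \leq \norm{n}_{L^\infty}\sqrt{x}\,\norm{\phi}_{L^2}$, so that $\norm{K[\phi]}_{L^2} \leq \tfrac{L}{\sqrt{2}}\norm{n}_{L^\infty}\norm{\phi}_{L^2}$; applying this to \eqref{eq:tfm3} gives $\norm{\xi}_{L^2} \leq (1 + \tfrac{L}{\sqrt{2}}\norm{n}_{L^\infty})\norm{\eta}_{L^2}$. The reverse bound $\norm{\eta}_{L^2} \leq (1 + \tfrac{L}{\sqrt{2}}\norm{\bar{n}}_{L^\infty})\norm{\xi}_{L^2}$ follows identically from the inverse transformation.

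For the $H^1$ half I would differentiate \eqref{eq:tfm3} in $x$, using the Leibniz rule to obtain
\begin{align}
	\partial_x \xi(x,t) = \partial_x \eta(x,t) - n(0)\eta(x,t) - \int_0^x n'(x-y)\eta(y,t)dy
\end{align}
The boundary term $n(0)\eta(x,t)$ is controlled pointwise by $\norm{\eta}_{L^2}$, and the remaining integral is again a Volterra operator with the bounded (piecewise continuous) kernel $n'$, so $\norm{\partial_x\xi}_{L^2} \leq \norm{\partial_x\eta}_{L^2} + (|n(0)| + \tfrac{L}{\sqrt{2}}\norm{n'}_{L^\infty})\norm{\eta}_{L^2}$. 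Combining the $L^2$ and derivative bounds and using $\norm{\eta}_{L^2} \leq \norm{\eta}_{H^1}$ yields $\norm{\xi}_{H^1} \leq C\norm{\eta}_{H^1}$; the symmetric computation on the inverse transform gives the opposite inequality. Adjoining the untouched $\norm{w}_{H^1}$ term to both components then produces the stated constants $C_1, C_2$.

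The main obstacle I anticipate is purely the regularity bookkeeping: the estimate on $\partial_x\xi$ requires $n'$ (and $\bar{n}'$) to be integrable and bounded, which is not immediate because $\check{a}$ in \eqref{eq:a_chk_def} is only piecewise continuous, with a jump at $x = L/\mu^*$. Consequently $n$ is only piecewise $C^1$, and the Leibniz differentiation must be carried out on $[0, L/\mu^*]$ and $[L/\mu^*, L]$ separately. This is harmless in the present $H^1$ framework, where piecewise differentiability is admissible (as already noted in the discussion surrounding \eqref{eq:decoup_tfm_x}), so the derivative-kernel Volterra term remains bounded and the argument closes.
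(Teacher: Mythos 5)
There is a genuine gap, and it sits exactly at the point you flagged as ``regularity bookkeeping.'' Your derivative estimate requires $n'$ to be bounded (you invoke $\norm{n'}_{L^\infty}$ and $|n(0)|$), and you justify this by asserting that the solution of the Volterra equation \eqref{eq:n_ie} inherits the regularity of $\check{a}$ and is therefore piecewise $C^1$. But that regularity is not available under the paper's hypotheses: the kernel existence result in Section \ref{sec:kernel} assumes only $\check{a} \in L^2(0,L)$ and delivers only $n \in L^2(0,L)$, and even under the stronger assumption $a \in C([0,L])$ used for the other kernels, $\check{a}$ in \eqref{eq:a_chk_def} is merely piecewise continuous, so the second-kind Volterra structure gives $n$ piecewise continuous --- inheritance of regularity buys you $C^0$, not $C^1$. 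Nothing provides the piecewise differentiability of $a$ (hence of $\check{a}$, hence of $n$) that your Leibniz-rule step needs, so the term $\int_0^x n'(x-y)\eta(y,t)\,dy$ cannot be bounded as you propose; the paper itself remarks that $n'$ is unbounded precisely because of the discontinuity of $n$. (A smaller error in the same step: even if $n$ were piecewise $C^1$ with a jump at $L/\mu^*$, differentiating the convolution for $x > L/\mu^*$ produces an additional term $\bigl(n((L/\mu^*)^+) - n((L/\mu^*)^-)\bigr)\,\eta(x - L/\mu^*,t)$ from the interior discontinuity of $y \mapsto n(x-y)$, which your formula omits.)

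The paper's proof is built to avoid ever differentiating $n$: since the kernel is a convolution kernel, $\partial_x n(x-y) = -\partial_y n(x-y)$, so one integrates by parts in $y$ to shift the derivative onto $\eta$, obtaining
\begin{align}
	\partial_x \left[\int_0^x n(x-y)\eta(y,t)\,dy\right] = n(x)\eta(0,t) + \int_0^x n(x-y)\partial_y \eta(y,t)\,dy \nonumber
\end{align}
The integral term is bounded by $\norm{n}_{L^2}\norm{\partial_x \eta}_{L^2}$ via Cauchy--Schwarz, and the trace term $\eta(0,t)$ is absorbed into $\norm{\eta}_{H^1}$ by Agmon's/Morrey's inequality. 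This closes the estimate using only $\norm{n}_{L^2}$ --- exactly the quantity the kernel lemma provides --- whereas your route needs $\norm{n'}_{L^\infty}$, which is unavailable. Your overall reduction (treat only the $\eta$-component since $w$ is untouched, use the inverse Volterra transform for the reverse inequality, split into $L^2$ and derivative estimates) does match the paper's; the fix is to replace your Leibniz step with this integration-by-parts argument.
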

		\begin{proof}[Partial proof]
		We will derive the estimate $C_1 \norm{(\xi,w)}_{H^1} \leq \norm{(\eta,w)}_{H^1}$. The computation of the upper estimate is identical, but employs the inverse transformation.
		
		The equivalence in $H^1$ norm is a nontrivial extension from equivalence in $L^2$ norm typically found in backstepping literature, since the kernel of transformation \eqref{eq:tfm3} has, in general, $L^2$ regularity, and may not be continuous. 

		We compute the $H^1$ bound on $\xi$ using \eqref{eq:tfm3} and the triangle inequality property of norms:
		\begin{align}
			\norm{\xi}_{H^1} \leq \norm{\eta}_{H^1} + \norm{\int_0^x n(x-y) \eta(y,t) dy}_{H^1} \label{eq:est_1}
		\end{align}
		From the definition of the $H^1$ norm,
		\begin{align}
			&\norm{\int_0^x n(x-y) \eta(y,t) dy}_{H^1} \nonumber\\
			&\quad= \bigg( \norm{\int_0^x n(x-y) \eta(y,t) dy}_{L^2}^2 \nonumber\\
			&\qquad\quad+ \norm{\partial_x \left[\int_0^x n(x-y) \eta(y,t) dy \right]}_{L^2}^2 \bigg)^{\frac{1}{2}} \label{eq:est_2}
		\end{align}
		The first term is easily bounded using Cauchy-Schwarz, and the fact that $x \in [0,L]$. The second term, however, requires a more involved analysis.
		\begin{align}
			&\norm{\partial_x \left[\int_0^x n(x-y) \eta(y,t) dy \right]}_{L^2} \nonumber\\
			&\quad= \norm{ n(0) \eta - \int_0^x n'(x-y) \eta(y,t) dy }_{L^2}
		\end{align}
		Note that because of the discontinuity in $n$, the derivative $n'$ is unbounded. However, since $n$ is a special convolution kernel, one can utilize integration by parts to shift the derivative, apply the triangle and Cauchy-Schawrz inequalities, and establish an estimate. The integration by parts yields
		\begin{align}
			&\norm{\partial_x \left[\int_0^x n(x-y) \eta(y,t) dy \right]}_{L^2} \nonumber\\
			&\quad= \norm{ n(x) \eta(0,t) + \int_0^x n(x-y) \partial_y \eta(y,t) dy }_{L^2} \nonumber\\
			&\quad\leq |\eta(0,t)| \norm{n}_{L^2} + \norm{n}_{L^2} \norm{\partial_x \eta}_{L^2}
		\end{align}
		By using a Sobolev embedding inequality (Agmon or Morrey's inequality in $\R$), one can bound the trace term $\eta(0,t)$ via an $H^1$ estimate on $\partial_x \eta$:
		\begin{align}
			&\norm{\partial_x \left[\int_0^x n(x-y) \eta(y,t) dy \right]}_{L^2} \nonumber\\
			&\quad\leq 2 \norm{n}_{L^2} \norm{\eta}_{H^1} \label{eq:est_3}
		\end{align}
		Combining \eqref{eq:est_1},\eqref{eq:est_2},\eqref{eq:est_3}, one finds the following estimate:
		\begin{align}
			\norm{\xi}_{H^1} &\leq \norm{\eta}_{H^1} + \norm{n}_{L^2} \norm{\eta}_{L^2} + 2 \norm{n}_{L^2} \norm{\eta}_{H^1} \nonumber \\
			&\leq (1 + 3 \norm{n}_{L^2})\norm{\eta}_{H^1}
		\end{align}
		Noting that $w$ is purely an identity transform, it is easy to see that one finds the bound
		\begin{align}
			C_1 \norm{(\xi,w)}_{H^1} \leq \norm{(\eta,w)}_{H^1}
		\end{align}
		with
		\begin{align}
			C_1 = (1 + 3 \norm{n}_{L^2})^{-1}
		\end{align}
		We stress that due to the convolution structure of the kernel $n$, it is sufficient to seek $L^2$ bounds on $n$ (studied in Section \ref{sec:kernel}) for $H^1$ norm equivalence.

	\end{proof}

		\begin{lemma}
			The transformation \eqref{eq:decoup_tfm} and its associated inverse transform establishes $H^1$ equivalence in norm between $(\eta,w)$ and $(z,w)$, i.e., there exist $C_3,C_4 > 0$ such that
			\begin{align}
				C_3 \norm{(\eta,w)}_{H^1} \leq \norm{(z,w)}_{H^1} \leq C_4 \norm{(\eta,w)}_{H^1}
			\end{align}\label{lem:tfm2_h1}
		\end{lemma}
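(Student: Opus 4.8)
The plan is to mirror the structure of the partial proof of Lemma~\ref{lem:tfm3_h1}, exploiting two structural features of \eqref{eq:decoup_tfm}: the $w$-component maps to itself (an identity transform), and $\eta$ depends \emph{affinely} on $z$ through the scalar $k_1 = (\gamma p^* - v^*)/v^*$, which is strictly positive in the congestion regime \eqref{eq:cong_cond}. Consequently the inverse is explicit: solving \eqref{eq:decoup_tfm} for $z$ gives $z = k_1^{-1}\eta - k_1^{-1} I[w]$ on $[0,L/\mu^*]$ and $z = k_1^{-1}\eta$ otherwise, where $I[w](x) := \int_{\mu^* x}^{L} a(y) w(y-\mu^* x,t)\,dy$. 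Both inequalities then reduce to a single reusable estimate, namely $\norm{I[w]}_{H^1} \leq C\norm{w}_{H^1}$: the forward map $\eta = k_1 z + I[w]$ yields $\norm{\eta}_{H^1} \leq k_1\norm{z}_{H^1} + \norm{I[w]}_{H^1}$, and the inverse yields $\norm{z}_{H^1} \leq k_1^{-1}(\norm{\eta}_{H^1} + \norm{I[w]}_{H^1})$, so once the integral operator is controlled by $\norm{w}_{H^1}$ both bounds follow from the triangle inequality and the fact that $w$ occupies the identity slot.

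To bound $\norm{I[w]}_{H^1}$, I would treat the $L^2$ and derivative parts separately. For the $L^2$ part, the change of variables $s = y - \mu^* x$ rewrites $I[w](x) = \int_0^{L-\mu^* x} a(s+\mu^* x) w(s,t)\,ds$, so Cauchy--Schwarz together with $x \in [0,L]$ gives $\norm{I[w]}_{L^2} \leq C\norm{a}_{L^2}\norm{w}_{L^2}$. For the derivative part, rather than differentiate from scratch I would invoke the spatial derivative already computed in \eqref{eq:decoup_tfm_x}, which (upon comparing $\partial_x \eta = k_1 \partial_x z + \partial_x I[w]$ with that display) identifies $\partial_x I[w] = -(\gamma p^* - v^*)^{-1}\Omega[w](x) - a(\mu^* x) w(0,t)$. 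The integral piece $\Omega[w]$ is bounded after the same change of variables by $C\norm{a}_{L^2}\norm{\partial_x w}_{L^2}$ via Cauchy--Schwarz, while the trace term is controlled by Agmon's (Morrey's) inequality in one dimension, $|w(0,t)| \leq C\norm{w}_{H^1}$, exactly as in the proof of Lemma~\ref{lem:tfm3_h1}. Collecting these gives $\norm{I[w]}_{H^1} \leq C(\norm{a}_{L^2},\norm{a}_{L^\infty})\,\norm{w}_{H^1}$, and hence the constants $C_3, C_4$ depend only on $k_1$ and the norms of $a$.

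I expect the genuine subtleties to be two pieces of bookkeeping and one embedding. First, the transform is piecewise with a possible jump in $\partial_x z$ at $x = L/\mu^*$, so I would split each $L^2$ norm over $[0,L/\mu^*]$ and $[L/\mu^*,L]$ and recombine; this is legitimate precisely because $H^1$ admits piecewise-differentiable representatives, as noted after \eqref{eq:decoup_tfm_x}. Second, the shifted kernel $w(y-\mu^* x)$ forces the Leibniz rule through both the moving lower limit and the $x$-dependence of the argument, which is exactly what produces the $\Omega[w]$ term and the trace term; I would lean on \eqref{eq:decoup_tfm_x} to avoid redoing this. The main obstacle is the trace term $a(\mu^* x) w(0,t)$: it is the one place where the estimate is genuinely an $H^1$ rather than an $L^2$ statement, it forces the Sobolev embedding, and it requires $a \in L^\infty$, which is guaranteed by the assumed $H^1$ (equivalently $C^1$) regularity of $a$. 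Every remaining step is a routine application of Cauchy--Schwarz and the triangle inequality.
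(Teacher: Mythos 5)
Your proof is correct and is essentially the argument the paper has in mind: the paper dispenses with this lemma in a single remark that \eqref{eq:decoup_tfm} is ``purely a piecewise affine transformation,'' and your write-up supplies exactly the details that remark glosses over --- explicit inversion using $k_1 > 0$ in the congested regime, Cauchy--Schwarz on the shifted integral after the change of variables $s = y - \mu^* x$, continuity of $\eta$ at the junction $x = L/\mu^*$ (since the integral vanishes there), and the Agmon/Morrey bound on the trace term $a(\mu^* x) w(0,t)$, mirroring the paper's own treatment of the analogous term in Lemma~\ref{lem:tfm3_h1}. One minor note: the trace term only requires $a \in L^2$, since $\norm{a(\mu^* \cdot)}_{L^2(0,L/\mu^*)} \leq \mu^{*-1/2} \norm{a}_{L^2(0,L)}$, so your appeal to $a \in L^\infty$ is harmless but not actually needed.
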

		For \eqref{eq:decoup_tfm}, the equivalence is quite simple to see. \eqref{eq:decoup_tfm} is purely a piecewise affine transformation.

		\begin{lemma}
			The transformation \eqref{eq:tfm1} and its associated inverse transform establishes $H^1$ equivalence in norm between $(z,w)$ and $(v,w)$, i.e., there exist $C_5,C_6 > 0$ such that
			\begin{align}
				C_5 \norm{(z,w)}_{H^1} \leq \norm{(v,w)}_{H^1} \leq C_6 \norm{(z,w)}_{H^1}
			\end{align}\label{lem:tfm1_h1}
		\end{lemma}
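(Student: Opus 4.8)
The plan is to exploit the fact that $w$ is left invariant by \eqref{eq:tfm1}, so that the two-sided $H^1$ estimate reduces to controlling $z$ in terms of $(v,w)$ through the forward map, and $v$ in terms of $(z,w)$ through its inverse. The essential simplification relative to Lemma \ref{lem:tfm3_h1} is that here the kernels $k,l,m$ are $C^1$ on the compact triangles $\mathcal{T}_l,\mathcal{T}_u$, hence bounded together with their first derivatives; consequently the integration-by-parts device needed there to absorb an unbounded $n'$ is unnecessary, and every derivative term can be estimated directly. I would therefore split $\norm{z}_{H^1}$ by the triangle inequality into an $L^2$ part and a $\partial_x$ part and bound each.

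For the forward direction the $L^2$ part is immediate: applying Cauchy--Schwarz to each of the three Volterra integrals in \eqref{eq:tfm1} and using $\norm{k}_{L^\infty},\norm{l}_{L^\infty},\norm{m}_{L^\infty}<\infty$ gives $\norm{z}_{L^2}\leq \norm{v}_{L^2}+C(\norm{v}_{L^2}+\norm{w}_{L^2})$. For the derivative part I would differentiate \eqref{eq:tfm1} by Leibniz's rule: the two lower-triangular integrals contribute the diagonal traces $k(x,x)v(x)$ and $l(x,x)w(x)$ plus interior integrals against $\partial_x k$ and $\partial_x l$, while the forwarding integral contributes $-m(x,x)w(x)$ together with $\int_x^L \partial_x m(x,y)\,w(y,t)\,dy$. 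Each trace term is a pointwise product of $v(x)$ or $w(x)$ with a bounded function of $x$, so its $L^2$ norm is dominated by $C\norm{v}_{L^2}$ or $C\norm{w}_{L^2}$, and each interior integral is again controlled by Cauchy--Schwarz using the boundedness of the kernel derivatives. Summing yields $\norm{\partial_x z}_{L^2}\leq \norm{\partial_x v}_{L^2}+C(\norm{v}_{L^2}+\norm{w}_{L^2})$, and combining with the $L^2$ bound and the identity $w\mapsto w$ produces $\norm{(z,w)}_{H^1}\leq C\norm{(v,w)}_{H^1}$, i.e. the left estimate $C_5\norm{(z,w)}_{H^1}\leq\norm{(v,w)}_{H^1}$ with $C_5=C^{-1}$.

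For the reverse inequality I would first establish invertibility. Because $w$ is unchanged, the map $(v,w)\mapsto(z,w)$ is block lower-triangular: the $v\to z$ block is $I$ minus the Volterra operator with kernel $k$, and the $w$-dependence enters only as a bounded forcing assembled from $l$ and $m$. Inverting thus amounts to solving a Volterra integral equation of the second kind for $v$, with $z$ and the $w$-forcing on the right-hand side; its resolvent kernel $\tilde{k}$ exists by successive approximation and inherits $C^1$ regularity from $k$. The inverse transformation then has exactly the same structural form as \eqref{eq:tfm1}, now with $C^1$ kernels obtained by composing $\tilde{k}$ with $l$ and $m$, so repeating the forward estimate verbatim gives the right inequality $\norm{(v,w)}_{H^1}\leq C_6\norm{(z,w)}_{H^1}$.

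The forward estimate is routine given the $C^1$ regularity of the kernels, so I expect the only delicate point to be the construction and regularity of the inverse. The main thing to verify is that the nonstandard forwarding term $\int_x^L m(x,y)\,w(y,t)\,dy$ does not obstruct invertibility. This is resolved by the observation that $w$ is fixed under the transformation, so the forwarding integral appears only as a known inhomogeneity in the Volterra equation for $v$ and never couples back into the operator whose inverse is sought; the resolvent construction therefore collapses to the classical theory for the lower-triangular kernel $k$ alone, and the regularity $\tilde{k}\in C^1$ follows by differentiating the resolvent's defining Volterra equation and invoking the same successive-approximation bound on the derivative.
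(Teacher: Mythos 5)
Your proof is correct. The paper itself gives no argument for this lemma beyond citing \cite{krsticbacksteppingpde} on the grounds that \eqref{eq:tfm1} is a ``classical backstepping transformation''; your write-up supplies precisely the standard argument behind that citation, adapted to the one non-classical feature of \eqref{eq:tfm1}. Your key structural observation---that $w$ is fixed by the map, so the forwarding integral $\int_x^L m(x,y)w(y,t)\,dy$ enters only as a known affine forcing and invertibility reduces to the resolvent of the lower-triangular kernel $k$ alone---is exactly the remark the paper makes when introducing \eqref{eq:tfm1} (the transformation ``remains a Volterra integral transformation of the second kind, as the forwarding integral enters in an affine manner in $v \leftrightarrow z$''). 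Your forward estimate (Cauchy--Schwarz for the $L^2$ part; Leibniz differentiation producing bounded diagonal trace terms $k(x,x)v$, $l(x,x)w$, $m(x,x)w$ plus integrals against bounded kernel derivatives) is the standard route, and correctly identifies why this lemma is genuinely easier than Lemma \ref{lem:tfm3_h1}, where the kernel $n$ is only $L^2$ and the integration-by-parts trick is unavoidable. One minor point to tighten: when you compose the resolvent $\tilde{k}$ with $l$ and $m$, the resulting $w$-kernel of the inverse involves integration up to $\min\{x,y\}$, so it is $C^1$ on each of $\mathcal{T}_l$ and $\mathcal{T}_u$ separately, with a possible jump across the diagonal---just as $l$ and $m$ themselves may jump there, per the condition $l(x,x)=m(x,x)-c(x)/(\gamma p^*)$. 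Your phrase ``exactly the same structural form as \eqref{eq:tfm1}'' should therefore be read triangle-by-triangle; since your estimates only use boundedness of the kernels and of their derivatives on each triangle, this does not affect the conclusion.
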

		\eqref{eq:tfm1} is a classical backstepping transformation, and thus equivalent norm properties are studied in \cite{krsticbacksteppingpde}.

	\section{Existence of solutions to kernel equations}
	\label{sec:kernel}
	For the backstepping transformations \eqref{eq:tfm1} and \eqref{eq:tfm3} to exist, both the companion boundary value problem given by \eqref{eq:k_pde}-\eqref{eq:m_pde} with boundary conditions \eqref{eq:k_bc}-\eqref{eq:m_bc2} and the integral equation \eqref{eq:n_ie} must have solutions.

	We will begin by studying the existence of solutions to \eqref{eq:k_pde}-\eqref{eq:m_pde}.

	\begin{lemma}
		Consider the boundary value problem given by the system of hyperbolic equations \eqref{eq:k_pde}-\eqref{eq:m_pde} with boundary conditions \eqref{eq:k_bc}-\eqref{eq:m_bc2}. Assume that $a,c \in C([0,L])$. Then there exist unique solutions $k,l \in C(\mathcal{T}_l)$ and $m \in C(\mathcal{T}_u)$.
	\end{lemma}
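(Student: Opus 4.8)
The plan is to reduce the coupled boundary value problem to an equivalent system of integral equations by integrating each transport equation along its characteristics, and then to establish existence and uniqueness by successive approximation in the Banach space $C(\mathcal{T}_l) \times C(\mathcal{T}_l) \times C(\mathcal{T}_u)$ endowed with the supremum norm. First I would dispose of $k$: since \eqref{eq:k_pde} is a pure transport equation, $k$ is constant along the lines $x - y = \mathrm{const}$ and is therefore determined by its trace on $y = 0$. Combining this with the boundary condition \eqref{eq:k_bc} yields the explicit representation
\begin{align}
	k(x,y) = \frac{k_1 v^*}{\gamma p^* - v^*} \, l(x - y, 0),
\end{align}
so that $k$ is eliminated in favor of the trace $l(\cdot,0)$, and the $c(y)k(x,y)$ source in \eqref{eq:l_pde} becomes a functional of $l(\cdot,0)$.

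Next I would integrate \eqref{eq:l_pde} and \eqref{eq:m_pde} along their common characteristic direction $(\gamma p^* - v^*, -v^*)$, i.e. lines of slope $-\mu^*$. In $\mathcal{T}_l$ the backward characteristic through any point $(x,y)$ meets the diagonal $y = x$ at the foot $\bar{x} = y + v^*(x-y)/(\gamma p^*) \in [y,x]$, and one checks this foot always lies in $[0,L]$ since $\gamma p^* > v^*$; integrating \eqref{eq:l_pde} from the diagonal and using the coupling $l(x,x) = m(x,x) - c(x)/(\gamma p^*)$ expresses $l(x,y)$ as the diagonal datum plus a characteristic integral of the source $c(y)k + v^* l(\cdot,0) a(y)$. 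In $\mathcal{T}_u$ the backward characteristic of \eqref{eq:m_pde} strikes either the edge $x = 0$ or the edge $y = L$ according as $y + \mu^* x \lessgtr L$; in both cases $m$ vanishes there (by $m(x,L)=0$ and \eqref{eq:m_bc2}), so this case distinction is benign and $m(x,y)$ is given purely as a characteristic integral of the source $v^* l(\cdot,0) a(y)$. Substituting these representations back produces a closed system of linear Volterra integral equations; in fact, because both the diagonal coupling and the source reduce to the single trace $l(\cdot,0)$, the system can be collapsed to a scalar Volterra equation of the second kind for $l(\cdot,0) \in C([0,L])$, from which $m$, $l$, and $k$ are reconstructed by quadrature.

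I would then define the associated integral operator $\mathcal{F}$, written as $\mathcal{F} = g + \mathcal{K}$ with $\mathcal{K}$ linear and Volterra, and verify that it maps continuous functions to continuous functions, using $a,c \in C([0,L])$ together with the continuous dependence of the characteristic foot-points on $(x,y)$. Existence and uniqueness then follow from the Volterra structure: each substitution integrates over a characteristic segment whose length vanishes at the initial manifolds, so the $n$-fold composition $\mathcal{K}^n$ carries a factor of order $L^n/n!$, whence the Neumann series $\sum_n \mathcal{K}^n g$ converges uniformly to the unique fixed point. Continuity of the limit is inherited from the uniform convergence of the continuous iterates, which yields $k,l \in C(\mathcal{T}_l)$ and $m \in C(\mathcal{T}_u)$.

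The main obstacle is that this is not a single-triangle backstepping kernel of the standard type: the nonlocal trace term $v^* l(x,0) a(y)$ enters both \eqref{eq:l_pde} and \eqref{eq:m_pde}, and $l$ and $m$ are coupled across the diagonal while living on different triangular domains. The crux is to confirm that, despite the trace feeding forward into both equations and the diagonal value of $m$ feeding back into $l$, the combined operator retains a genuine causal (Volterra) structure with no algebraic loop, so that the factorial bound on the iterated integrals applies. Concretely this amounts to checking that every occurrence of $l(\cdot,0)$ on the right-hand side is evaluated at a spatial argument no larger than the current one, which the foot-point maps computed above confirm; establishing this ordering---and the accompanying bookkeeping across the two triangular domains---is where the argument departs from the classical case.
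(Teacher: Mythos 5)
Your proposal is correct and follows essentially the same route as the paper's proof: eliminating $k$ via its characteristic representation $k(x,y) = k_1\mu^* l(x-y,0)$, integrating the $(l,m)$ system along the common characteristics (with the same diagonal coupling and the same $x=0$ versus $y=L$ case split for $m$), collapsing everything to a scalar Volterra equation for the trace $l(\cdot,0)$, and solving by successive approximations with uniqueness from linearity. Your explicit verification that every foot-point argument of $l(\cdot,0)$ stays at or below the current abscissa (ensuring genuine Volterra causality) is exactly the structural fact the paper's piecewise trace equation relies on, merely stated more explicitly than the paper does.
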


	\begin{proof}
		The proof follows from finding a solution via the method of characteristcs. Since there is interesting non-local behavior arising in the plant, the companion kernel PDEs will exhibit non-local coupling as well, which must be treated carefully in the characteristics.
		
		\begin{figure}[tb]
		  \centering
			\includegraphics[width=\linewidth]{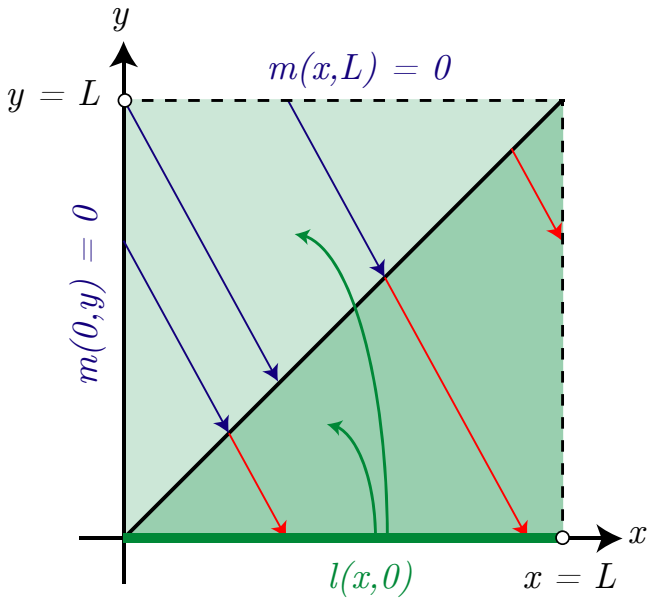}
			\caption{Characteristic lines for $(l,m)$ companion kernel PDE. A transmission condition between the two kernels appears at $y = x$. The boundary value $l(x,0)$ introduces feedback via trace terms in the evolution equation for both $l,m$.}
			\label{fig:characteristics}
	\end{figure}

		We begin by inspecting the $k$-PDE. By directly applying the characteristics method, it is not hard to see that $k$ has the following representation for a solution in $\mathcal{T}_l$:
		\begin{align}
			k(x,y) &= k(x-y,0) = k_1 \mu^* l(x-y,0) \label{eq:k_soln}
		\end{align}
		By using \eqref{eq:k_soln} as a representation for $k$ in \eqref{eq:l_pde}, one can find the self-contained system $(l,m)$. The $(l,m)$ characteristics are sketched in Figure \ref{fig:characteristics}. In particular, note how boundary conditions between $l,m$ are coupled, in particular, at $y = x$ and $y = 0$.
		
		By a direct application of the method of characteristics to \eqref{eq:l_pde}, one can recover an integral equation representation for $l$:
		\begin{align}
			l(x,y) &= m(\sigma_1(x,y),\sigma_1(x,y)) - \frac{c(\sigma_1(x,y))}{\gamma p^*} \nonumber\\&\quad+ \int_0^{\frac{x-y}{\gamma p^*}} \bigg[k_1 \mu^* c(-v^* s + \sigma_1(x,y)) l(\gamma p^* s, 0) \nonumber\\&\qquad\qquad\qquad+ v^* a(-v^*s + \sigma_1(x,y)) \nonumber\\&\qquad\qquad\qquad\quad \times l((\gamma p^* - v^*)s + \sigma_1(x,y),0) \bigg] ds \label{eq:l_inteq}
		\end{align}
		where $\sigma_1$ is defined
		\begin{align}
			\sigma_1(x,y) &= \frac{v^* x + (\gamma p^* - v^*) y}{\gamma p^*}
		\end{align}
		Similarly, a direct application of the method of characteristics to \eqref{eq:m_pde} will yield the following piecewise defined integral relation for $m$:
		\begin{align}
			m(x,y) &= \begin{cases} m_1(x,y) & y \leq -\mu^* x + L \\ m_2(x,y) & y > -\mu^* x + L \end{cases} \label{eq:m_inteq}\\
			m_1(x,y) &= \int_0^{\frac{x}{\gamma p^* - v^*}} v^* a(-v^* s + \sigma_2(x,y)) \nonumber\\&\qquad\qquad\qquad\qquad\qquad\times l((\gamma p^* - v^*)s, 0) ds \nonumber \\
			m_2(x,y) &= \int_0^{\frac{L-y}{v^*}} v^* a(-v^* s + L)\nonumber\\&\qquad\qquad\qquad\times l((\gamma p^* - v^*)s + \sigma_3(x,y),0) ds \nonumber
		\end{align}
		where $\sigma_2, \sigma_3$ are
		\begin{align}
			\sigma_2(x,y) &= \mu^* x + y \\
			\sigma_3(x,y) &= x - \frac{1}{\mu^*}(L-y)
		\end{align}

		By substituting in \eqref{eq:m_inteq} into \eqref{eq:l_inteq} and evaluating the result at $y=0$, one generates the following integral equation defined piecewise:
		\begin{align}
		    l(x,0) &= \begin{cases} l_d(x) & x \leq \frac{L}{\mu^*} \\ l_u(x) & x > \frac{L}{\mu^*}\end{cases} \label{eq:lx0}\\
		    l_d(x) &= -\frac{1}{\gamma p^*} c\left( \frac{v^*}{\gamma p^*} x \right) \nonumber\\
		        &\quad+ \int_0^{\frac{\mu^* x}{\gamma p^*}} v^* a(-v^*s + \mu^* x) l((\gamma p^* - v^*)s,0) ds \nonumber\\
		        &\quad+ \int_0^{\frac{x}{\gamma p^*}} \bigg[ k_1 \mu^* c\left(-v^*s + \frac{v^*}{\gamma p^*} x\right) l(\gamma p^* s, 0) \nonumber\\
		        &\qquad\qquad\qquad+ v^* a\left(-v^* s + \frac{v^*}{\gamma p^*} x \right) \nonumber\\
		        &\qquad\qquad\qquad\quad \times l \left( (\gamma p^* - v^*) s + \frac{v^*}{\gamma p^*} x, 0 \right) \bigg] ds \\
		    l_u(x) &= -\frac{1}{\gamma p^*} c\left( \frac{v^*}{\gamma p^*} x \right) \nonumber\\
		        &\quad\nonumber\\[5pt]
		        &\quad+ \int_0^{\frac{L}{v^*} - \frac{1}{\gamma p^*}x} v^* a(-v^*s + L) \nonumber\\
		        &\qquad\qquad\qquad\times l\left((\gamma p^* - v^*)s + x - \frac{L}{\mu^*},0\right) ds \nonumber\\
		        &\quad+ \int_0^{\frac{x}{\gamma p^*}} \bigg[ k_1 \mu^* c\left(-v^*s + \frac{v^*}{\gamma p^*} x\right) l(\gamma p^* s, 0) \nonumber\\
		        &\qquad\qquad\qquad+ v^* a\left(-v^* s + \frac{v^*}{\gamma p^*} x \right) \nonumber\\
		        &\qquad\qquad\qquad\quad \times l \left( (\gamma p^* - v^*) s + \frac{v^*}{\gamma p^*} x, 0 \right) \bigg] ds
		\end{align}
		
		Note that the separate cases of $\mu^* > 1, \mu^* \leq 1$ are self contained in the definition \eqref{eq:lx0}: for $\mu^* \leq 1$, only the condition corresponding to $l_d(x)$ is activated since $x \in (0,L)$, while for $\mu^* > 1$, both cases must be considered.
		
		We will give a rough sketch of the remainder, as it is quite straightforward from \cite{krsticbacksteppingpde,chen2017transportwave,su2017boundary}. By using the method of successive approximations, we can establish an iteration $\{l_n\}_{n=0}^{\infty} \rightarrow l(x,0)$, which can be shown to converge uniformly since the integral equations admitted are affine. This establishes the existence of a solution which can be shown to be unique due to the linearity. The regularity is recovered by noting that the pieces of the solution $l(x,0)$ are $C([0,L])$ compatible. Then the $C(\mathcal{T}_l)$ solutions $k,l$ and $C(\mathcal{T}_u)$ solution $m$ can be directly generated by evaluating $\eqref{eq:k_soln}, \eqref{eq:l_inteq}, \eqref{eq:m_inteq}$, respectively.
	\end{proof}

	\begin{lemma}
		Consider the integral equation given by \eqref{eq:n_ie}, and the definition of $\check{a}$ in \eqref{eq:a_chk_def}. If $\check{a} \in L^2(0,L)$, then there exists a unique $L^2(0,L)$ solution $n$.
	\end{lemma}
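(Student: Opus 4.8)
The plan is to read \eqref{eq:n_ie} as a linear Volterra integral equation of the second kind whose integral term is a \emph{convolution}, and to solve it by a contraction mapping in a weighted $L^2$ space. Writing $\beta := (v^* - \gamma p^*)^{-1}$, which is finite and nonzero in the congestion regime since $\gamma p^* > v^*$, the equation becomes $n = \beta \check{a} + Tn$, where $(Tn)(x) := \beta \int_0^x \check{a}(x-z)\,n(z)\,dz = \beta(\check{a}*n)(x)$. First I would check that $T$ is a bounded linear operator on $L^2(0,L)$: since the interval is bounded, $\check{a} \in L^2(0,L) \subset L^1(0,L)$, and extending $\check{a},n$ by zero, Young's convolution inequality gives $\norm{\check{a}*n}_{L^2} \leq \norm{\check{a}}_{L^1}\norm{n}_{L^2}$, so $T$ maps $L^2(0,L)$ into itself.

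The difficulty is that $\check{a}$ carries only $L^2$ regularity and is piecewise-defined, hence possibly discontinuous at $x = L/\mu^*$; the convolution kernel is therefore unbounded, and neither the continuity-based successive-approximation estimate used in the preceding lemma nor a naive contraction in the unweighted $L^2$ norm applies directly. The key step is to introduce the exponentially weighted norm
\[
	\norm{n}_\lambda^2 := \int_0^L e^{-2\lambda x}\,|n(x)|^2\,dx, \qquad \lambda > 0,
\]
which is equivalent to the ambient $L^2(0,L)$ norm because $e^{-2\lambda L} \leq e^{-2\lambda x} \leq 1$ on $[0,L]$, so $(L^2(0,L),\norm{\cdot}_\lambda)$ is still a Banach space. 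Splitting $\check{a}(x-z)\,n(z) = (\check{a}(x-z)e^{\lambda z})(e^{-\lambda z}n(z))$ and applying Cauchy--Schwarz, together with the substitution $u = x-z$, yields the pointwise bound $|(Tn)(x)|^2 \leq |\beta|^2 e^{2\lambda x}\,\norm{\check{a}}_\lambda^2\,\norm{n}_\lambda^2$; integrating against $e^{-2\lambda x}$ over $[0,L]$ then gives
\[
	\norm{Tn}_\lambda \leq |\beta|\,\sqrt{L}\,\norm{\check{a}}_\lambda\,\norm{n}_\lambda.
\]

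Next I would observe that $\norm{\check{a}}_\lambda \to 0$ as $\lambda \to \infty$ by dominated convergence, since $e^{-2\lambda x}|\check{a}(x)|^2 \to 0$ pointwise for $x > 0$ and is dominated by the fixed $L^1$ function $|\check{a}|^2$. Hence one may fix $\lambda$ so large that $|\beta|\sqrt{L}\,\norm{\check{a}}_\lambda < 1$, making $T$ a strict contraction on $(L^2(0,L),\norm{\cdot}_\lambda)$. The Banach fixed-point theorem then produces a unique fixed point $n$ of the affine map $m \mapsto \beta\check{a} + Tm$, which is exactly the desired $L^2(0,L)$ solution of \eqref{eq:n_ie}; norm equivalence transfers both existence and uniqueness back to the standard $L^2(0,L)$ topology. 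Equivalently, since the $k$-fold iterated Volterra kernel is supported on a shrinking simplex, $\norm{T^k}$ decays factorially and the Neumann series $\sum_k T^k = (I-T)^{-1}$ converges in operator norm, recovering the successive-approximation viewpoint of the earlier lemmas. I expect the main obstacle to be precisely this low regularity of $\check{a}$: one must avoid any bound that presumes a bounded or continuous kernel and instead exploit the convolution/Volterra structure, which the weighted-norm device does cleanly.
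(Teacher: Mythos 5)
Your proof is correct, but it takes a genuinely different route from the paper. The paper treats \eqref{eq:n_ie} by the Laplace transform: since the integral term is a convolution, transforming gives an algebraic relation between $\hat{n}(s)$ and $\hat{\check{a}}(s)$, and the solution is recovered by inverse transform. (As stated in the paper this is only a sketch --- it never actually solves for $\hat{n}(s) = \beta\hat{\check{a}}(s)/(1-\beta\hat{\check{a}}(s))$, nor argues that the denominator is nonvanishing for $\mathrm{Re}(s)$ large so that the result is the transform of an $L^2$ function.) Your argument instead runs a contraction mapping for $n = \beta\check{a} + \beta(\check{a}*n)$ in the exponentially weighted norm $\norm{n}_\lambda$, using Cauchy--Schwarz with the weight split across the convolution and dominated convergence to make $|\beta|\sqrt{L}\,\norm{\check{a}}_\lambda < 1$ for $\lambda$ large; your contraction estimate checks out, and the equivalence of $\norm{\cdot}_\lambda$ with the ambient $L^2$ norm correctly transfers existence and uniqueness back. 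What your approach buys is a self-contained, fully rigorous proof at exactly the $L^2$ regularity the lemma assumes, with an explicit a priori bound on $\norm{n}_{L^2}$; the weight $e^{-\lambda x}$ is in fact the time-domain shadow of taking $\mathrm{Re}(s) = \lambda$ large in the paper's frequency-domain argument, so the two proofs are morally dual, but yours fills in the step the paper leaves implicit. One small caveat: your parenthetical claim that $\norm{T^k}$ decays factorially is not immediate for a merely-$L^2$ kernel (the classical factorial bound uses boundedness of the kernel); it does hold after noting that the second iterated kernel $\check{a}*\check{a}$ is bounded by $\norm{\check{a}}_{L^2}^2$, but since this is offered only as an alternative viewpoint it does not affect the validity of the main argument.
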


	\begin{proof}
		The proof is relatively straightforward, and one may employ standard linear integral equation techniques to recover $n$. Since $\check{a}$ is assumed piecewise continuous, then one such potential method is utilzing the Laplace transform on \eqref{eq:n_ie}:
		\begin{align}
			\hat{n}(s) &= (\gamma p^* - v^*)^{-1} \left[\hat{\check{a}}(s) + \hat{n}(s) \hat{\check{a}}(s) \right]
		\end{align}
		The solution $n \in L^2$ is found by applying the inverse Laplace transform:
		\begin{align}
			n(x) &= \mathcal{L}^{-1} \bigg\{ (\gamma p^* - v^*)^{-1} \left[\hat{\check{a}}(s) + \hat{n}(s) \hat{\check{a}}(s) \right]\bigg\}
		\end{align}
	\end{proof}

	\section{Conclusion}
	In this paper, we have presented a control design for damping app-routing instabilities in a linear ARZ traffic model. The control design is an novel extension to classical infinite-dimensional backstepping, allowing for additional forwarding to account for the non-local boundary condition arising from the routing.

	In this control design, we present stability analysis in $H^1$, which is different than the previous work in $L^2$ stabilizing backstepping control for linear ARZ models. This point is emphasized in the paper, as stability in $L^2$ is, in general, \emph{insufficient} to guarantee existence of solutions in closed loop. In particular, for the ARZ model to be valid, certain pointwise boundedness conditions must be fulfilled, which $L^2$ stability does not necessarily guarantee. Thus, by performing stability analysis in $H^1$, the designer can guarantee existence of solutions on the infinite-time interval by restricting the set of initial conditions for which the control design is valid for. The $H^1$ estimate is conservative, as it is a sufficient condition but not necessary (a less conservative result, for example, may be to consider $W^{1,\infty}$).

	One key weakness to be addressed in this design is the assumption that the app routing $U_{rout}$ is known a priori. In general, the routing is unknown as it involves proprietary algorithms in the separate apps, and may not even be consistent between competing apps. Thus, a natural extension to consider is estimating the app routing disturbance in several various ways. One such case is to presume that the routing feedback is of a linear form (as we have assumed in our paper), and identify the feedback gain $a(y)$ online and combine the identification algorithm with our proposed boundary controller. However, the $H^1$ stability analysis may become extremely convoluted, as we must guarantee $H^1$ bounds such that the model remains valid on the infinite time interval. Another approach may be to treat the routing app as an adversarial controller, assume a bound on app routing disturbance, and then attempt to design a boundary feedback controller subject to bounded state constraints.
	
	\bibliographystyle{ieeetr}
    \bibliography{ref}

	%
	%
\end{document}